\newcommand{\bsx}{\boldsymbol{x}}%
\newcommand{\bsm}{\boldsymbol{m}}%
\newcommand{\bsj}{\boldsymbol{j}}%
\newcommand{\bsy}{\boldsymbol{y}}%
\newcommand{\bsk}{\boldsymbol{k}}%
\newcommand{\bszero}{\boldsymbol{0}}%
\newcommand{\NN}{\mathbb N}
\newcommand{\ZZ}{\mathbb Z}
\newcommand{\lcm}{{\rm lcm}}
\newtheorem{theorem}{Theorem}[section]
\newtheorem{lemma}[theorem]{Lemma}
\newtheorem{remark}[theorem]{Remark}
\theoremstyle{definition}
\newtheorem{definition}[theorem]{Definition}
\numberwithin{equation}{section}
\begin{document}

\title{A lower bound on the star discrepancy of generalized Halton sequences in rational bases}
\author       {Roswitha Hofer}


\maketitle
\begin{abstract}
In this paper we extend a result of Levin, who proved a lower bound on the star discrepancy of generalized Halton sequences in positive integer bases, to rational bases. 
\end{abstract}
\noindent{MSC2010: 11K31, 11K38}\\
\noindent{Keywords: Halton sequences, discrepancy, lower bounds}\\

\section{Introduction and main result}\label{sec:1}

For applications --- for instance in finance, physics, or digital imaging --- one relies on point distributions in the multidimensional unit cube that are uniformly spread. 
One important measure for the uniformity of a point set $\bsx_0,\bsx_1,\ldots,\bsx_{N-1}$ of $N$ points in $[0,1]^s$ is the star discrepancy $D^*_N$, defined by 
$$D_N^*(\bsx_0,\ldots,\bsx_{N-1})=\sup_{\bsy\in(0,1]^s}\left|\frac{\#\{0\leq n< N:x^{(i)}_n < y^{(i)}, \,i=1,\ldots,s\}}{N}-\prod_{i=1}^sy^{(i)}\right|,$$
where $x^{(i)}_n$ and $y^{(i)}$ denote the $i$th components of $\bsx_n$ and $\bsy$. 
For an infinite sequence $(\bsx_n)_{n\geq 0}$ in $[0,1]^s$, the star discrepancy $D^*_N$ is defined via the first $N$ elements of the sequence. The star discrepancy appears as one main magnitude in the celebrated Koksma--Hlawka inequality. This inequality gives an upper bound of the integration error of a quadrature rule that heavily depends on the star discrepancy of the sampling points. Hence, the smaller the discrepancy the better the approximation of the integral. Concerning this measure of uniformity the best explicit examples of sequences in dimension $s$ satisfy discrepancy bounds in the style of 
\begin{equation}\label{eq:ldseq}
ND_N^*\leq c \log^s N
\end{equation}
for $N$ large enough and with a positive constant $c$ that might depend on certain parameters but is independent of $N$. 
By a famous result of Schmidt \cite{schmidt} it is known that this bound is sharp in one dimension, as in this case a lower bound 
$$ND_N^*\geq c \log\, N$$
with an absolute constant $c>0$ holds for infinitely many $N$. 
In the $s$-dimensional case with $s>1$ the best known lower bound is due to Bilyk, Lacey and Vagharshakyan \cite{BLV}, and is of the form 
$$ND_N^*\geq c (\log\, N)^{s/2 +\eta}$$
for infinitely many $N$, with positive constants $c$ and $\eta$ that might depend on $s$ but are independent of $N$. This result improved and generalized earlier results by Roth \cite{roth}, Beck \cite{beck}, and Bilyk and Lacey \cite{BL}. 

In the theory of uniform distribution it is frequently conjectured, that \eqref{eq:ldseq} is best possible already. This is the reason why sequences satisfying a bound of the form \eqref{eq:ldseq} are called low-discrepancy sequences. 
In a series of four papers \cite{LevinH1,LevinH2,Levints1,Levints2} Levin is supporting this conjecture, by proving for many classes of low-discrepancy sequences a lower bound of the form 
$$\limsup_{N\to\infty}ND^*_N/\log^sN>0$$
where $s$ is the dimension of the sequence. 

In this paper we appeal to Levin's results of the papers \cite{LevinH1,LevinH2} for the ordinary and the generalized Halton sequences. For the definition of these sequences we introduce the $b$-adic radical inverse function $\varphi_b:\NN_0\to[0,1]$ for a fixed integer base $b\geq 2$. Let $n=\sum_{j=1}^\infty a_j(n)b^{j-1}$ be the unique base $b$ representation of $n$ with $a_j(n)\in\{0,1,\ldots,b-1\}$ then $$\varphi_b(n):=\sum_{j=1}^\infty a_j(n)/b^j.$$ This radical inverse function can be generalized by using a sequence of permutations $\Sigma=(\sigma_j)_{j\geq 1}$ on $\{0,1,\ldots,b-1\}$. Then we set 
$$\varphi^\Sigma_b(n):=\sum_{j=1}^\infty \sigma_j(a_j(n))/b^j.$$
Now for an $s$-dimensional ordinary Halton sequence $(H_{b_1,\ldots,b_s}(n))_{n\geq 0}$ choose $s$ pairwise coprime integer bases $b_1,\ldots,b_s\geq 2$ and set 
$$H_{b_1,\ldots,b_s}(n)=(\varphi_{b_1}(n),\ldots,\varphi_{b_s}(n)).$$
For an $s$-dimensional generalized Halton sequence $(H^\Sigma_{b_1,\ldots,b_s}(n))_{n\geq 0}$ choose $s$ pairwise coprime integer bases $b_1,\ldots,b_s\geq 2$ and in addition for each $i\in\{1,\dots,s\}$ a sequence of permutations $\Sigma_i=(\sigma_{i,j})_{j\geq 1}$ on $\{0,1,\ldots,b_i-1\}$. Finally, set 
$$H^\Sigma_{b_1,\ldots,b_s}(n)=(\varphi^{\Sigma_1}_{b_1}(n),\ldots,\varphi^{\Sigma_s}_{b_s}(n)).$$

In the starting paper \cite{LevinH1} Levin ensured 
$$\limsup_{N\to\infty}ND^*_N(H_{b_1,\ldots,b_s}(n))/\log^sN>0$$
by using an elementary method of proof. (This proof can be found with more details in the survey article \cite{KaltStock}). 

This method was generalized in \cite{LevinH2} and applied to several modified Halton sequences that are using Cantor's expansions, Neumann-Kakutani's $b$-adic adding machines, and digital permutations. One consequence of the result therein is 
$$\limsup_{N\to\infty}ND^*_N(H^\Sigma_{b_1,\ldots,b_s}(n))/\log^sN>0.$$
 
In this paper we consider a further modification of the Halton sequence, $$(H^\Sigma_{u_1/v_1,\ldots,u_s/v_s}(n))_{n\geq 0},$$ using rational bases $u_1/v_1,\ldots,u_s/v_s$ \cite{HoferRB} and prove for such sequences that 
$$\limsup_{N\to\infty}ND^*_N(H^\Sigma_{u_1/v_1,\ldots,u_s/v_s}(n))/\log^sN>0.$$
Our method of proof is following an appropriate modification of the methods introduced in \cite{LevinH1,LevinH2}. 

The paper is organized as follows. In Section~\ref{sec:2} we introduce an expansion of integers to rational bases, that is used to define a radical inverse function to rational bases. This is needed for the definition of the sequence $(H^\Sigma_{u_1/v_1,\ldots,u_s/v_s}(n))_{n\geq 0}$ and we state our main Theorem~\ref{thm:1}, which is proved in Section~\ref{sec:4}. Section~\ref{sec:3} collects some auxiliary lemmas that are needed for the proof of Theorem~\ref{thm:1}.

\section{An expansion to rational bases, generalized Halton sequences to rational bases, and the main Theorem~\ref{thm:1}}\label{sec:2}

Let $b\geq 2$ be an integer. It is well-known that every integer $z$ has a $b$-adic expansion of the form 
\begin{equation}\label{equ:leZ}
z=\sum_{r=1}^\infty a_rb^{r-1}\quad\text{ with } a_r\in\{0,1,\ldots,b-1\} \text{ for }r\in\NN. 
\end{equation}
In equation \eqref{equ:leZ} the coefficients $a_r$ can be computed by the following algorithm.
\begin{itemize}
\item[-] Set $z_0:=z$ and for each $r\geq 1$ set $z_{r}=(z_{r-1}-a_{r})/b$, where $a_{r}$ is the unique element in $\{0,1,\ldots,b-1\}$ such that $b|(z_{r-1}-a_{r})$. \\
\end{itemize}

First we generalize this algorithm to rational numbers $b=u/v$ with coprime integers $u\geq 2,\,v\geq 1$
\begin{itemize}
\item[-] Set $z_0:=z$ and for each $r\geq 1$ set $z_{r}=(v z_{r-1}-a_{r})/u$ where $a_{r}$ is the unique element in $\{0,1,\ldots,u-1\}$ such that $u|(vz_{r-1}-a_{r})$. \\
\end{itemize}
Note that $z_r$ is an integer for every $r\geq0$. 
Application of this algorithms produces the following \emph{formal} $u/v$-adic expansion of $z$
\begin{equation}\label{equ:FReZ}
\sum_{r=1}^\infty \frac{a_r}{v}\left(\frac{u}{v}\right)^{r-1}=:(a_1,a_2,\ldots)_{u/v}.
\end{equation} 
The expansion \eqref{equ:FReZ} coincides with the non-rational one in \eqref{equ:leZ} if $v=1$ and $u=b$. 
\begin{remark}{\rm
Note that we speak of a \emph{formal} expansion and do not take care about convergence of the series. 
Using induction we deduce for $j\in\NN_0$ the following identity 
\begin{equation}\label{id:1}
z=\sum_{r=1}^{j}\frac{a_r}{v}\left(\frac{u}{v}\right)^{r-1} + z_j\left(\frac{u}{v}\right)^{j}.
\end{equation}}

\end{remark}

\begin{remark}\label{rem:1}{\rm 
We call an expansion of the form \eqref{equ:FReZ} \emph{finite} if there are only finitely many nonzero $a_r$. 

If $u>v$ then \eqref{equ:FReZ} is finite for every nonnegative rational integer $z$ and it coincides with the rational base number system for $\NN_0$ considered by Akiyama et al. in \cite{AFS}. }
\end{remark}
 
\begin{remark}\label{rem:2} {\rm
Switching from the integer base $b$ to a rational base $u/v$ with coprime $u$ and $v$ is different from taking a digital permutation. Regard for example $u/v=3/2$. By Remark~\ref{rem:1} each $n\in\NN_0$ has a finite expansion. For the sake of simplicity we write $(a_1,a_2,\ldots,a_k)_{u/v}$ where $k$ is maximal such that $a_k\neq 0$ instead of $(a_1,a_2,\ldots)_{u/v}$. The number $0$ is denoted by $(0)_{u/v}$. In detail we obtain 
\begin{center}
\begin{tabular}{|l|l|l|l|l|}
\hline
$n$&$0$&$1$&$2$&$3$\\
\hline$(a_1,a_2,\ldots)_{3}$ &$(0)_{3}$&$(1)_{3}$&$(2)_{3}$&$(0,1)_{3}$\\
\hline$(a_1,a_2,\ldots)_{3/2}$ &$(0)_{3/2}$&$(2)_{3/2}$&$(1,2)_{3/2}$&$(0,1,2)_{3/2}$\\
\hline $n$&$4$&$5$&$6$&$7$\\
\hline$(a_1,a_2,\ldots)_{3}$&$(1,1)_{3}$&$(2,1)_{3}$&$(0,2)_{3}$&$(1,2)_{3}$\\
\hline$(a_1,a_2,\ldots)_{3/2}$&$(2,1,2)_{3/2}$&$(1,0,1,2)_{3/2}$&$(0,2,1,2)_{3/2}$&$(2,2,1,2)_{3/2}$\\
\hline $n$&$8$&$9$&10&11\\ 
\hline $(a_1,a_2,\ldots)_{3}$ &$(2,2)_{3}$&$(0,0,1)_{3}$&$(1,0,1)_{3}$&$(2,0,1)_{3}$\\
\hline $(a_1,a_2,\ldots)_{3/2}$ &$(1,1,0,1,2)_{3/2}$&$(0,0,2,1,2)_{3/2}$&$(2,0,2,1,2)_{3/2}$&$(1,2,2,1,2)_{3/2}$\\
\hline
\end{tabular}.
\end{center}
For instance, the behavior of the lengths of the finite expansions shows that switching from base $3$ to base $3/2$ cannot be described by digital permutations. 
}
\end{remark}

For our definition of Halton type sequences to rational bases we need first a proper radical inverse function. 

Let $u,v\in\NN$, satisfy $u\geq 2$ and $\gcd(u,v)=1$. Let $\Sigma=(\sigma_r)_{r\geq 1}$ be a sequence of permutations on $\{0,1,\ldots,u-1\}$. We define the \emph{$u/v$-adic radical inverse function} $\varphi^{\Sigma}_{u/v}:\NN_0\to[0,1]$ by 
$$n\mapsto \sum_{r\geq 1} \frac{\sigma_r(a_r)}{u^{r}}$$
where 
$$\sum_{r\geq 1} \frac{{a_r}}{v}\left(\frac{u}{v}\right)^{r-1}$$
is the formal $u/v$-adic expansion of $n$. 

We are finally in a position to define generalized Halton sequences to rational bases, which are low-discrepancy sequences (cf. \cite[Theorem~3]{HoferRB}). 
\begin{definition}\label{def:1}
Let $s$ be a dimension. Let $u_1,v_1,u_2,v_2,\ldots,u_s,v_s\in\NN$ satisfy $u_i\geq 2$, $\gcd(u_i,v_i)=1$ for $1\leq i\leq s$, and $\gcd(u_i,u_j)=1$ for all $1\leq i\neq j\leq s$. For each $i\in\{1,2,\ldots,s\}$ let $\Sigma_i=(\sigma_{i,r})_{r\geq 1}$ be a sequence of permutations on $\{0,1,\ldots,u_i-1\}$. Then the sequence $(\bsx_n)_{n\geq 0}$ in $[0,1]^s$ where the $n$th element $\bsx_n$ is given by 
$$\bsx_n=H_{u_1/v_1,\ldots,u_s/v_s}^{\Sigma}(n)=(\varphi^{\Sigma_1}_{u_1/v_1}(n),\ldots, \varphi^{\Sigma_s}_{u_s/v_s}(n))$$
is an \emph{$s$-dimensional generalized Halton sequence in bases $(u_1/v_1,u_2/v_2,\ldots,u_s/v_s)$}. 
\end{definition}
If $v_1=v_2=\cdots=v_s=1$ then the sequences in Definition~\ref{def:1} coincide with generalized Halton sequences, which were mentioned in Section~\ref{sec:1}. If, furthermore, $\sigma_{i,r}$ is the identity map on $\{0,1,\ldots,u_i-1\}$ for every $r\geq 1$ and $i=1, \ldots,s$ then Definition~\ref{def:1} gives the ordinary Halton sequence in pairwise coprime bases $(u_1,\ldots,u_s)$. Remark~\ref{rem:2} ensures that generalized Halton sequences in integer bases form a \emph{strict} subset of generalized Halton sequences in rational bases. 

Our main result in this paper is formulated in the following theorem, which generalizes results in \cite{LevinH1,LevinH2} and will be proved in Section~\ref{sec:4}. Section~\ref{sec:3} provides the main auxiliary results needed in Section~\ref{sec:4}. 
\begin{theorem}\label{thm:1}
Let $s\in\NN$. The star discrepancy of the $s$-dimensional generalized Halton sequence $(\bsx_n)_{n\geq 0}$ in Definition~\ref{def:1} satisfies 
$$\limsup_{N\to\infty}ND_N^*(\bsx_n)/\log^s N >0.$$
\end{theorem}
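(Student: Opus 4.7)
The plan is to adapt Levin's strategy from \cite{LevinH1,LevinH2} with modifications that accommodate the rational base expansion introduced in Section~\ref{sec:2}. I would select a subsequence $N_\ell \to \infty$ of sampling sizes of the form $N_\ell \asymp \prod_{i=1}^s u_i^{k_{i,\ell}}$ with each $k_{i,\ell}$ of order $\log N_\ell / \log u_i$, together with a test box $B_\ell = \prod_{i=1}^s [0, y_{i,\ell}) \subset [0,1]^s$ whose one-sided faces $y_{i,\ell}$ are carefully chosen $u_i$-adic rationals, and establish a lower bound of the form
$$\left| \#\{0 \leq n < N_\ell : \bsx_n \in B_\ell\} - N_\ell \lambda_s(B_\ell) \right| \geq c \prod_{i=1}^s k_{i,\ell}$$
with $c > 0$ independent of $\ell$. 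Since $\prod_i k_{i,\ell} \asymp \log^s N_\ell$, this yields the desired $N D^*_N \geq c \log^s N$ along $N = N_\ell$.

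The key structural input, which I expect to be supplied by Section~\ref{sec:3}, is a digit-versus-residue correspondence: the first $k$ coefficients $a_1, \ldots, a_k$ in the formal $u/v$-adic expansion of $n$ depend only on $n \bmod u^k$, and as $n$ runs through $\{0, 1, \ldots, u^k - 1\}$ the tuple $(a_1, \ldots, a_k)$ bijects onto $\{0, 1, \ldots, u-1\}^k$. Given the pairwise coprimality of $u_1, \ldots, u_s$, the Chinese Remainder Theorem applied to the moduli $u_i^{k_{i,\ell}}$ then decouples the global count over $B_\ell$ into a product of $s$ independent one-dimensional counts, each of which is itself a sum over prescribed digit patterns up to level $k_{i,\ell}$.

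For each coordinate I would then follow Levin's one-dimensional mechanism: the digits $\beta_{i,r}$ defining $y_{i,\ell} = \sum_{r=1}^{k_{i,\ell}} \beta_{i,r} u_i^{-r}$ are chosen level by level so that the local deviation from the expected count — which depends on the permutation $\sigma_{i,r}$ via the radical inverse — carries the same sign at every $r$. This forces the $k_{i,\ell}$ level-wise contributions to add constructively rather than cancel, producing a one-dimensional imbalance of order $k_{i,\ell}$. Multiplying the $s$ coordinate-wise imbalances through the CRT-product decomposition delivers the product lower bound claimed above, and a standard perturbation argument handles the fact that $N_\ell$ need not coincide with $\prod_i u_i^{k_{i,\ell}}$ exactly.

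The main obstacle I foresee is the rational-base bookkeeping. When $v_i > 1$, the $u_i/v_i$-adic expansion of $n$ is typically infinite and admits no natural finite truncation coming from the size of $n$; moreover the algorithm defining the coefficients $a_r$ involves multiplication by $v$ at each step, which twists the residue bookkeeping compared to the integer base case. One must therefore verify carefully that the digit/residue correspondence still holds cleanly, and that the tail $\sum_{r > k_{i,\ell}} \sigma_{i,r}(a_r)/u_i^r$ can be controlled sharply enough not to destroy the comparison between $\bsx_n$ and its truncated analogue on which the CRT decomposition is based. Once the auxiliary lemmas of Section~\ref{sec:3} supply these technical ingredients, the rest of the argument proceeds along the established template of \cite{LevinH1,LevinH2}, with constants now carrying an extra dependence on $v_1, \ldots, v_s$.
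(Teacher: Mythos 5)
Your overall frame---Levin-style boxes anchored at carefully chosen $u_i$-adic corners, the digit-vs-residue correspondence (the paper's Lemma~\ref{lem:1}), CRT over the pairwise coprime moduli $u_i^{k_i}$, and a final truncation/$\varepsilon$-shift step---matches the paper. But the core quantitative step in your proposal is not valid as stated, and it is exactly the step where the real work lies. You claim that the CRT ``decouples the global count over $B_\ell$ into a product of $s$ independent one-dimensional counts'' and that one can then ``multiply the $s$ coordinate-wise imbalances.'' Local discrepancies do not multiply: the deviation for the product box $[\bszero,\bsy)$ is a \emph{sum} over the $m^s$ elementary boxes $B(\bsj)$ into which the box decomposes, and each summand is the indicator of a single residue class $A_{\bsj} \bmod \overline{U}_{\bsj}$ minus its expectation --- not a product of one-dimensional deviations. (If coordinate-wise imbalances could simply be multiplied, Schmidt's one-dimensional bound would immediately give $\log^s N$ in every dimension, which is a famous open problem.) Moreover, for a single fixed sample size $N_\ell$ these $m^s$ summands can cancel, and nothing in your proposal prevents that; also, when $N_\ell$ is an exact multiple of $\prod_i u_i^{k_{i,\ell}}$ the count over each elementary box is exact and the deviation vanishes, so the ``decoupled product'' picture cannot produce a lower bound by itself.

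The paper's proof supplies precisely the two missing mechanisms. First, instead of a fixed $N_\ell$ it shifts the start to a point $w_m$ adjacent to the corner and \emph{averages} the counting error over $M=1,\dots,\overline{U}_{\bsm}$; this makes each elementary-box contribution exactly computable as $\tfrac12 - A_{\bsj}/\overline{U}_{\bsj} - 1/(2\overline{U}_{\bsj})$, and a large average forces a large value for some $M\le N$, which suffices for the limsup. Second, to keep all $m^s$ averaged contributions of one sign and of size bounded below, the digit positions $k_{i,j}$ are taken in $\tau_i\NN$ with $\tau_i=\lcm(\alpha_i,\beta_i)$ (so that $\overline{v}_{i,m}^{k_{i,j_i}}\equiv 1$ and $\overline{M}_{i,\bsj}\equiv 1 \pmod{u_i}$), the common digit value is pigeonholed via $b_i^{(T)}$ from the permutation differences $\sigma_{i,r}^{-1}(0)-\sigma_{i,r}^{-1}(1)$, and the arithmetic non-cancellation Lemma~\ref{lem:3} ($\sum_i b_i\overline{v}_i^{\tau_i}/u_i\not\equiv \tfrac12 \pmod 1$) guarantees $A_{\bsj}/\overline{U}_{\bsj}$ stays away from $\tfrac12$ uniformly in $\bsj$, yielding $|\alpha_m|\ge m^s/(4u_1\cdots u_s)$. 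Your ``choose digits level by level so the deviations have the same sign'' gestures at this, but without the averaging over $M$, the residue-class analysis of $A_{\bsj}$, and a lemma of the type of Lemma~\ref{lem:3} (which is where the rational-base twist $\overline{v}_i^{\tau_i}$ actually enters), the argument does not go through. Your concern about infinite tails in the $u_i/v_i$-expansion is real but minor; it is handled by working with the truncated points $[\bsx_n]_t$ and removing the truncation at the end.
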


\section{Auxiliary Results}\label{sec:3}

\begin{lemma}\cite[Lemma~1]{HoferRB}\label{lem:1}
Let $u,v,j\in\NN$, satisfy $u\geq 2$ and $\gcd(u,v)=1$. Let $z^{(1)},z^{(2)}$ be integers and $(a_1^{(1)},a_2^{(1)},\ldots)_{u/v}$ be the $u/v$-adic expansion of $z^{(1)}$ and $(a_1^{(2)},a_2^{(2)},\ldots)_{u/v}$ the $u/v$-adic expansion of $z^{(2)}$. Then $a_r^{(1)}=a_r^{(2)}$ for every $1\leq r\leq j$ if and only if $z^{(1)}$ is congruent $z^{(2)} $ modulo $u^j$. 
\end{lemma}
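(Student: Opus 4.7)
The plan is to treat both directions as simple algebraic consequences of identity \eqref{id:1}. Clearing denominators in \eqref{id:1} gives
\begin{equation*}
v^j z = \sum_{r=1}^{j} a_r v^{j-r} u^{r-1} + z_j u^j,
\end{equation*}
so that $v^j z \equiv \sum_{r=1}^{j} a_r v^{j-r} u^{r-1} \pmod{u^j}$. Since $\gcd(u,v)=1$, the integer $v$ (and hence $v^j$) is invertible modulo $u^j$, so this congruence already shows that the residue $z \bmod u^j$ is a function of $(a_1,\ldots,a_j)$ alone.

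For the forward direction I would simply note that if $a_r^{(1)} = a_r^{(2)}$ for $r=1,\ldots,j$, then the displayed congruence applied to $z^{(1)}$ and to $z^{(2)}$ has identical right-hand sides, yielding $v^j(z^{(1)} - z^{(2)}) \equiv 0 \pmod{u^j}$; cancelling the unit $v^j$ gives $z^{(1)} \equiv z^{(2)} \pmod{u^j}$.

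For the converse I would argue by induction on $r$, using the fact that the algorithm defining the digits gives $a_r \equiv v\,z_{r-1} \pmod u$, so that $a_r$ is determined by $z_{r-1} \bmod u$. The base $r=1$ is immediate since $z_0 = z$. For the inductive step, assume $a_s^{(1)} = a_s^{(2)}$ for $s \leq r$ with $r < j$; writing \eqref{id:1} for both $z^{(1)}$ and $z^{(2)}$ and subtracting kills the common digit sum and produces $u^r\bigl(z_r^{(1)} - z_r^{(2)}\bigr) = v^r\bigl(z^{(1)} - z^{(2)}\bigr)$. Combined with $u^j \mid z^{(1)} - z^{(2)}$ and $\gcd(u^j,v^r)=1$, this forces $z_r^{(1)} \equiv z_r^{(2)} \pmod{u^{j-r}}$, and in particular modulo $u$; applying the digit rule gives $a_{r+1}^{(1)} = a_{r+1}^{(2)}$.

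The only (mild) thing to be careful about is the cancellation of $v^j$ modulo $u^j$, which rests entirely on the coprimality assumption $\gcd(u,v)=1$; everything else is bookkeeping with \eqref{id:1}. I do not anticipate a genuine obstacle, since the lemma is essentially the statement that the map $z \mapsto (a_1,\ldots,a_j)$ factors through $\ZZ/u^j\ZZ$ and is injective there.
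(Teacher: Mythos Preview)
The paper does not actually prove this lemma; it is quoted from \cite[Lemma~1]{HoferRB} and stated without argument. Your proof is correct and self-contained: clearing denominators in \eqref{id:1} to get $v^j z \equiv \sum_{r=1}^{j} a_r v^{j-r} u^{r-1} \pmod{u^j}$ settles the forward direction at once via the invertibility of $v^j$ modulo $u^j$, and the induction for the converse---using the subtracted identity $u^r\bigl(z_r^{(1)}-z_r^{(2)}\bigr)=v^r\bigl(z^{(1)}-z^{(2)}\bigr)$ to push the congruence down to $z_r^{(1)}\equiv z_r^{(2)}\pmod{u^{j-r}}$ and then reading off $a_{r+1}$ from the digit rule $a_{r+1}\equiv v z_r\pmod u$---is clean and complete. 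There is nothing in the present paper to compare against.
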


For the formulation of the next lemma we introduce the truncation operator. 
By the construction of our sequence $(\bsx_n)_{n\geq 0}$ we have that each point of the $i$th component $(x_n^{(i)})_{n\geq 0}$, $i\in\{1,\ldots,s\}$, has prescribed $u_i$-adic digit expansion, where the case with almost all digits equal to $u_i-1$ is admissible. Now for $t\in\NN$ let $[x]_{t}$ denote $\sum_{j=1}^tx_ju_i^{-j}$ where we use the prescribed digit expansion of $x$ in base $u_i$ of the form $x=\sum_{j=1}^\infty x_ju_i^{-j}$ with $x_j\in\{0,1,\ldots, u_i-1\}$.

\begin{lemma}\label{lem:2}
Let $u_i,v_i$ and $\sigma_{i,r}$ for $i=1,\ldots,s$ and $r\geq 1$ be as in Definition~\ref{def:1}. Furthermore, let $k_i\in\NN$ and $y_i=\sum_{j=1}^{\infty}y_{i,j}u_i^{-j}$ with $y_{i,j}\in\{0,1,\ldots,u_i-1\}$ for $j\geq 1$ and $i=1,\ldots,s$. Furthermore, let $y_{i,k_i}>0$ for $i=1,\ldots,s$. We write $\bsk$ for $(k_1,\ldots,k_s)$ and $U_{\bsk}$ for $\prod_{i=1}^su_i^{k_i}$. 
Let $t\in\NN$ be such that $t\geq k_i$ for all $i=1,\ldots,s$. 
Let $\overline{v}_i\in\NN$ be such that $v_i\overline{v}_i\equiv 1 \pmod{u_i^{k_i}}$ and $M_{i,\bsk}$ be solving $$M_{i,\bsk}\equiv \left(\prod_{j=1,j\neq i}^su_j^{k_j}\right)^{-1}\pmod{u_i^{k_i}}.$$ 
Then 
\begin{enumerate}
\item for $i\in\{1,\ldots,s\}$ we have $[\varphi^{\Sigma_i}_{u_i/v_i}(n)]_{t}\in[[y_i]_{k_i},[y_i]_{k_i}+1/u_i^{k_i})$ if and only if
$$n\equiv \underbrace{\sum_{j=1}^{k_i}\sigma_{i,j}^{-1}(y_{i,j})u_i^{j-1}\overline{v}^j_i}_{=:\dot{y}_{i,k_i}}\pmod{{u_i^{k_i}}}.$$
\item $[\varphi^{\Sigma_i}_{u_i/v_i}(n)]_{t}\in[[y_i]_{k_i},[y_i]_{k_i}+1/u_i^{k_i})$ for all $i=1\ldots,s$ if and only if 
$$n\equiv \underbrace{{\sum_{i=1}^sM_{i,\bsk}U_{\bsk}\dot{y}_{i,k_i}u_i^{-k_i}}}_{=:\ddot{y}_{\bsk}}\pmod{U_{\bsk}}.$$

\item for $i\in\{1,\ldots,s\}$ we have $[\varphi^{\Sigma_i}_{u_i/v_i}(n)]_{t}\in[[y_i]_{k_i}-1/u_i^{k_i},[y_i]_{k_i})$ if and only if
$$n\equiv {\sum_{j=1}^{k_i-1}\sigma_{i,j}^{-1}(y_{i,j})u_i^{j-1}\overline{v}^j_i}+\sigma_{i,k_i}^{-1}(y_{i,j}-1)u_i^{k_i-1}\overline{v}^{k_i}_i\equiv \dot{y}_{i,k_i}+{b_i}u_i^{k_i-1}\overline{v}^{k_i}_i\pmod{{u_i^{k_i}}}$$
where $b_i\in\{1,\ldots,u_i-1\}$ is chosen such that 
$$b_i\equiv \sigma_{i,k_i}^{-1}(y_{i,j}-1)-\sigma_{i,k_i}^{-1}(y_{i,j})\pmod{u_i}.$$
\item $[\varphi^{\Sigma_i}_{u_i/v_i}(n)]_{t}\in[[y_i]_{k_i}-1/u_i^{k_i},[y_i]_{k_i})$ for all $i=1\ldots,s$ if and only if 
$$n\equiv \ddot{y}_{\bsk}+{\sum_{i=1}^sM_{i,\bsk}U_{\bsk}b_i\overline{v}^{k_i}_iu_i^{-1}}\pmod{U_{\bsk}}$$
with $b_i$ given in the last item. 
\item if $k_i'\geq k_i$ for all $i=1,\ldots,s$ then $\ddot{y}_{\bsk'}\equiv \ddot{y}_{\bsk}\pmod{U_{\bsk}}$. 
\item $v_i\overline{v}_i\equiv 1 \pmod{u_i^j}$ for all $j\in\{1,\ldots,k_i\}$. 
\item if $\alpha_i\in\NN$ solves $v_i^{\alpha_i}\equiv 1\pmod{u_i}$, then $\overline{v}_i^{\alpha_i}\equiv 1\pmod{u_i}$. 
\end{enumerate}
\end{lemma}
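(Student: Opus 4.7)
The plan is to establish the single-coordinate items (1) and (3) directly from the $u_i/v_i$-adic expansion algorithm together with Lemma~\ref{lem:1}, and then deduce (2) and (4) by the Chinese Remainder Theorem, while items (5)--(7) follow from routine modular arithmetic. Item (6) is immediate from $u_i^j\mid u_i^{k_i}$, and item (7) follows by raising $v_i\overline{v}_i\equiv 1\pmod{u_i}$ to the $\alpha_i$-th power and cancelling $v_i^{\alpha_i}\equiv 1\pmod{u_i}$.

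For item (1), the key observation is that since $t\geq k_i$, the condition $[\varphi^{\Sigma_i}_{u_i/v_i}(n)]_t\in[[y_i]_{k_i},[y_i]_{k_i}+u_i^{-k_i})$ is equivalent to asking that the first $k_i$ base-$u_i$ digits of $\varphi^{\Sigma_i}_{u_i/v_i}(n)$ coincide with $y_{i,1},\ldots,y_{i,k_i}$. By the definition of the radical inverse this translates into $a_r=\sigma_{i,r}^{-1}(y_{i,r})$ for $r=1,\ldots,k_i$, where $(a_1,a_2,\ldots)_{u_i/v_i}$ is the $u_i/v_i$-adic expansion of $n$. Lemma~\ref{lem:1} then converts this into a single congruence modulo $u_i^{k_i}$, and it remains to identify the representative. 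I would iterate the recursion $u_iz_r=v_iz_{r-1}-a_r$ to obtain the identity $v_i^j n=\sum_{r=1}^j a_rv_i^{j-r}u_i^{r-1}+u_i^j z_j$, multiply by $\overline{v}_i^j$, and use $\overline{v}_i^jv_i^{j-r}\equiv\overline{v}_i^r\pmod{u_i^j}$ (which is where item (6) is invoked) to arrive at $n\equiv\sum_{r=1}^j a_ru_i^{r-1}\overline{v}_i^r\pmod{u_i^j}$. Specialising $j=k_i$ and $a_r=\sigma_{i,r}^{-1}(y_{i,r})$ produces exactly $\dot{y}_{i,k_i}$.

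Items (2) and (4) then follow from (1) and (3) respectively by the standard Chinese Remainder reconstruction: pairwise coprimality of the $u_i$ gives $\gcd(u_i^{k_i},u_j^{k_j})=1$ for $i\neq j$, and combining $M_{i,\bsk}$ with $U_{\bsk}/u_i^{k_i}$ yields the stated $\ddot{y}_{\bsk}$ (and its analogue for (4)). Item (3) itself parallels (1) with a single digit altered: since $y_{i,k_i}>0$, the interval $[[y_i]_{k_i}-u_i^{-k_i},[y_i]_{k_i})$ forces the first $k_i-1$ base-$u_i$ digits of the truncation to be $y_{i,1},\ldots,y_{i,k_i-1}$ and the $k_i$-th digit to be $y_{i,k_i}-1$; running the computation from item (1) with this replacement gives the first form of the congruence. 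Writing $\sigma_{i,k_i}^{-1}(y_{i,k_i}-1)=\sigma_{i,k_i}^{-1}(y_{i,k_i})+b_i+\ell u_i$ for a suitable $\ell\in\ZZ$ and noting that $\ell u_i\cdot u_i^{k_i-1}\equiv 0\pmod{u_i^{k_i}}$ collapses the expression to the clean form $\dot{y}_{i,k_i}+b_iu_i^{k_i-1}\overline{v}_i^{k_i}$. Finally, for item (5) I would take $n=\ddot{y}_{\bsk'}$ with $t\geq\max_i k_i'$: item (2) at level $\bsk'$ places each truncation into the subinterval of length $u_i^{-k_i'}$, which lies inside the coarser interval of length $u_i^{-k_i}$ (as $k_i'\geq k_i$), and item (2) at level $\bsk$ then forces $\ddot{y}_{\bsk'}\equiv\ddot{y}_{\bsk}\pmod{U_{\bsk}}$.

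\textbf{Expected main obstacle.} The only non-routine bookkeeping sits in item (3): one must verify that altering a single digit at position $r=k_i$ contributes \emph{exactly} the correction $b_iu_i^{k_i-1}\overline{v}_i^{k_i}$ modulo $u_i^{k_i}$, which is precisely why $b_i$ needs to be defined only modulo $u_i$, with higher-order multiples absorbed into the modulus. Everything else unwinds cleanly from Lemma~\ref{lem:1}, the iterative identity above, and CRT.
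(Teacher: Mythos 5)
Your proposal is correct and follows essentially the same route as the paper: for item (1) you use the iterated recursion identity (which is exactly the paper's identity \eqref{id:1} multiplied by $v_i^{k_i}$), multiply by $\overline{v}_i^{k_i}$, and invoke Lemma~\ref{lem:1} to get the congruence characterization, just as the paper does. The remaining items, which the paper dismisses as straightforward, are filled in by you with the expected arguments (CRT for (2) and (4), the single-digit replacement absorbing multiples of $u_i$ for (3), interval containment for (5), and trivial modular arithmetic for (6)--(7)), all of which check out.
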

\begin{proof}
We only prove the first item, since the others are straightforward. 

Using \eqref{id:1} we have $[\varphi^{\Sigma_i}_{u_i/v_i}(n)]_{t}\in[[y_i]_{k_i},[y_i]_{k_i}+1/u_i^{k_i})$ if
$$n=\sum_{r=1}^{k_i}\frac{\sigma^{-1}_{i,r}(y_{i,r})}{v_i}\left(\frac{u_i}{v_i}\right)^{r-1}+z\left(\frac{u_i}{v_i}\right)^{k_i}$$
with certain $z\in\ZZ$. We multiply this equality with $v_i^{k_i}$ and obtain 
$$v_i^{k_i}n\equiv \sum_{r=1}^{k_i}\sigma^{-1}_{i,r}(y_{i,r}){u_i}^{r-1}v_i^{k_i-r} \pmod{u_i^{k_i}}.$$
Finally multiplication with $\overline{v_i}^{k_i}$ together with Lemma~\ref{lem:1} brings the desired result.  
\end{proof}

\begin{lemma}\label{lem:3}
Let $s>1$. Let $u_i,v_i$ be as in Definition~\ref{def:1}. Let $\overline{v}_i$ be such that $\overline{v}_iv_i\equiv 1 \pmod{u_i}$. Let $\tau_i\in\NN$ be such that $v_i^{\tau_i}\equiv 1 \pmod{u_i}$ and $b_i\in\{1,\ldots,u_i-1\}$ for $i=1,\ldots,s$. Then 
$$\sum_{i=1}^s\frac{b_i\overline{v}_i^{\tau_i}}{u_i}\not\equiv \frac{1}{2}\pmod{1}.$$
\end{lemma}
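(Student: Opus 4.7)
My strategy is to reduce the displayed non-congruence to an elementary statement about $b_i/u_i \pmod 1$, and then derive a contradiction from pairwise coprimality together with the hypothesis $s>1$.

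The first move is to invoke item~(7) of Lemma~\ref{lem:2}: from $v_i^{\tau_i} \equiv 1 \pmod{u_i}$ one obtains $\overline{v}_i^{\tau_i} \equiv 1 \pmod{u_i}$, hence $b_i \overline{v}_i^{\tau_i} - b_i$ is divisible by $u_i$ and therefore
$$\frac{b_i \overline{v}_i^{\tau_i}}{u_i} \equiv \frac{b_i}{u_i} \pmod 1.$$
It is thus enough to show $\sum_{i=1}^s b_i/u_i \not\equiv 1/2 \pmod 1$ under the standing assumptions.

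I would then argue by contradiction: suppose $\sum_i b_i/u_i = k + 1/2$ for some $k \in \ZZ$. Writing $U = \prod_{i=1}^s u_i$ and clearing denominators, this becomes
$$2\sum_{i=1}^s b_i \, \frac{U}{u_i} = (2k+1)\, U.$$
Because $s > 1$ and the $u_i \geq 2$ are pairwise coprime, at most one $u_i$ can be even, so I can fix an index $j$ with $u_j$ odd. Reducing the last display modulo $u_j$ annihilates every term with $i\neq j$ on the left (since then $u_j\mid U/u_i$) and also annihilates the right-hand side (since $u_j\mid U$), leaving $u_j \mid 2 b_j (U/u_j)$. Pairwise coprimality yields $\gcd(u_j, U/u_j) = 1$, and $u_j$ odd yields $\gcd(u_j, 2) = 1$; combining, $u_j \mid b_j$, which contradicts $1 \leq b_j \leq u_j-1$.

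The only place where the hypothesis $s>1$ is essential is in securing an odd $u_j$; indeed, the case $s=1$ with $u_1=2$, $v_1=1$, $b_1=1$ would produce exactly $1/2$, so the hypothesis cannot be dropped. I do not expect genuine obstacles beyond this parity bookkeeping and the clean reduction via Lemma~\ref{lem:2}(7).
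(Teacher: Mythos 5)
Your proof is correct and rests on the same two pillars as the paper's proof: item (7) of Lemma 2 to eliminate $\overline{v}_i^{\tau_i}$, and the pairwise coprimality of the $u_i$ to force a contradiction. The difference is in the bookkeeping. The paper first reduces each fraction $b_i/u_i$ to lowest terms $\overline{b}_i/\overline{u}_i$, then splits into two cases according to whether the product $\overline{u}_1\cdots\overline{u}_s$ is odd or even; in the even case it constructs integers $c_1, c_2$ with $c_1\equiv c_2$ modulo that product and compares them modulo $\overline{u}_l$ for some index $l$ different from the one carrying the factor of $2$. You instead clear denominators once, note that since $s>1$ and the $u_i$ are pairwise coprime at least one $u_j$ must be odd, and reduce the resulting integer identity modulo that $u_j$; every term but the $j$-th vanishes, and $\gcd(u_j,\,2\,U/u_j)=1$ yields $u_j\mid b_j$ in one stroke, contradicting $1\le b_j\le u_j-1$. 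Your version avoids both the lowest-terms reduction and the case split, so it is a slicker rendering of the same underlying idea.
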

\begin{proof}
Let $d_i=\gcd(b_i,u_i)$, $\overline{b}_i=b_i/d_i$ and $\overline{u}_i=u_i/d_i$ and $\overline{u}_0=\overline{u}_1\cdots \overline{u}_s$. 
Then application of Lemma~\ref{lem:2} item 7 yields
$$\sum_{i=1}^s\frac{b_i\overline{v}_i^{\tau_i}}{u_i}\equiv \sum_{i=1}^s\frac{\overline{b}_i}{\overline{u}_i} \pmod{1}.$$
The rest of the proof follows the main ideas of \cite[Proof of Lemma~2]{LevinH2}. 
Assume first $\overline{u}_0\neq 0\pmod{2}$. Then the result immediately follows. 
If $\overline{u}_j\equiv 0\pmod{2}$ for some $j\in\{1,\ldots,s\}$ and
$$\sum_{i=1}^s\frac{\overline{b}_i}{\overline{u}_i} \equiv \frac{1}{2}\pmod{1}$$
then 
$$\underbrace{(\frac{\overline{u}_j}{2}-\overline{b}_j)/\overline{u}_j}_{=:c_1/\overline{u}_0}\equiv \underbrace{\sum_{i=1,i\neq j}^s\frac{\overline{b}_i}{\overline{u}_i}}_{=:c_2/\overline{u}_0}\pmod{1}$$ 
where $c_1$ and $c_2$ are integers such that $c_1\equiv c_2\pmod{\overline{u}_0}$. 
Now let $l\in\{1,\ldots,s\}$ such that $l\neq j$. Then 
$c_1\equiv 0 \pmod{\overline{u}_l}$ but $c_2\not\equiv 0 \pmod{\overline{u}_l}$, which yields the desired contradiction. 
\end{proof}

\section{Proof of Theorem~\ref{thm:1}}\label{sec:4}

If $s=1$ the result follows from the famous result of Schmidt \cite{schmidt}. So we can assume $s\geq 2$ in the following. 

For each $i\in\{1,\ldots,s\}$ we define $\tilde{u}_i:=u_1 \cdots u_s/u_i$, 
$$\beta_i:=\min\{1\leq k\leq \tilde{u}_i:u_i^k\equiv 1 \pmod{\tilde{u}_i}\},$$
and
$$\alpha_i:=\min\{1\leq k\leq u_i:v_i^k\equiv 1 \pmod{u_i}\}$$
and we set $\tau_i=\lcm(\alpha_i,\beta_i)$. 

We abbreviate $\max_{i\in\{1,\ldots,s\}}u_i$ with $u_0$ and define $T:=[(\log_{u_0}N)/s-1]$ for $N>u_0^{4su_0^{s+1}u_1\ldots u_s}$. 
For each $i\in\{1\ldots,s\}$ we define a sequence $(e_{i,r})_{r\geq 1}$ in $\{1,\ldots,u_i-1\}$ by setting $e_{i,r}\equiv \sigma_{i,r}^{-1}(0)-\sigma_{i,r}^{-1}(1)\pmod{u_i}$ and for each $b_i\in\{1,\ldots,u_i-1\}$ we regard the set 
$$\mathcal{L}^{(T)}_{i,b_i}:=\{1\leq r \leq T: \tau_i|r\mbox{ and }e_{i,r}=b_i\}$$ 
with its cardinality $L^{(T)}_{i,b_i}$. Let $b_i^{(T)}$ satisfy $$L^{(T)}_{i,b_i^{(T)}}=\max_{1\leq b_i\leq u_i}L^{(T)}_{i,b_i}.$$

Let $$m:=\min_{i=1,\ldots,s}L^{(T)}_{i,b_i^{(T)}}.$$ 
Note that with $\tau_0:=\max_{i\in\{1,\ldots,s\}}\tau_i\leq u_0^s$ we obtain the following chain of inequalities
\begin{equation}\label{eq:mLB}
m\geq T/(\tau_0 u_0)\geq T/u_0^{s+1}\geq \frac{\log N}{2s u_0^{s+1}\log u_0}>2u_1\ldots u_s
\end{equation}
since $N>u_0^{4su_0^{s+1}u_1\ldots u_s}$. 

Choose subsets 
$$\{k_{i,1}<\cdots<k_{i,m}\}\subseteq \mathcal{L}^{(T)}_{i,b^{(T)}_i}$$
and define 
$$y_i:=\sum_{j_i=1}^mu_i^{-k_{i,j_i}}$$
and $$[\bszero,\bsy):=\prod_{i=1}^s[0,y_i).$$
We abbreviate $(k_{1,j_1},\ldots,k_{s,j_s})$ to $\bsj$ and $(k_{1,m},\ldots,k_{s,m})$ to $\bsm$. 
Set $\overline{U}_{\bsj}:=\prod_{i=1}^su_i^{k_{i,j_i}}$. 
We observe that 
\begin{equation}\label{eq:UBUm}
2\overline{U}_{\bsm} = 2\prod_{i=1}^su_i^{k_{i,m}}\leq 2 u_0^{sT}\leq u_0^{s[s^{-1}\log_{u_0}N]}\leq N. 
\end{equation}
We define $t:=\max_{i\in\{1,\ldots,s\}}(\lceil \log_{u_i}N\rceil)$. Note that $T\leq t$. 
In the following we concentrate on the point set $([\bsx_n]_t)_{0\leq n <N}$ where 
$[\bsx_n]_t$ denotes $([x_n^{(1)}]_t,\ldots,[x_n^{(s)}]_t)$.

Let $w_m$ be the smallest nonnegative integer $<\overline{U}_{\bsm}$ such that 
$$[\bsx_{w_m}]_t\in\prod_{i=1}^s[y_i,y_i+u_i^{-k_{i,m}}).$$
Such a $w_m$ exists by the regularity of the point set $([\bsx_n]_t)_{0\leq n<N}$ which is ensured by Lemma~\ref{lem:1}. 

In the following we will concentrate on 
\begin{equation}\label{eq:CF}
\sum_{n=w_m}^{w_m+{M}-1}\left(\underline{1}_{[\bszero,\bsy)}([\bsx_n]_{t})-y_1\cdots y_s\right)
\end{equation}
with $1\leq M\leq \overline{U}_{\bsm}$. 

First we separate the interval $[\bszero,\bsy)$ into a union of disjoint intervals as follows
$$\bigcup_{1\leq j_1,\ldots,j_s\leq m}\underbrace{\prod_{i=1}^s\left[\sum_{l=1}^{j_i-1}u_i^{-k_{i,{l}}},\sum_{l=1}^{j_i}u_i^{-k_{i,l}}\right)}_{=:B(\bsj)}.$$
Note that $\lambda_s(B(\bsj))=1/\overline{U}_{\bsj}$. 

Then \eqref{eq:CF} equals 
$$\sum_{1\leq j_1,\ldots ,j_s\leq m}\underbrace{\sum_{n=w_m}^{w_m+{M-1}}\left(\underline{1}_{B(\bsj)}([\bsx_n]_{t})-\frac{1}{\overline{U}_{\bsj}}\right)}_{\Sigma_{\bsj,M}}=:\Sigma_{M}.$$

The core of the proof is to compute the average 
$$\alpha_m:=\frac{1}{\overline{U}_{\bsm}}\sum_{M=1}^{\overline{U}_{\bsm}}\Sigma_{M}.$$
We write $M=M_1\overline{U}_{\bsj}+M_2$ with $0< M_2\leq \overline{U}_{\bsj}$ and using the regularity of the point set $([\bsx_n]_t)_{0\leq n<N}$ we observe 
\begin{equation*}
\Sigma_{\bsj,M_1\overline{U}_{\bsj}}=0. 
\end{equation*}
Hence 
\begin{equation*}
\Sigma_{\bsj,M}=\sum_{n=w_m+M_1\overline{U}_{\bsj}}^{w_m+M_1\overline{U}_{\bsj}+{M_2-1}}\left(\underline{1}_{B(\bsj)}([\bsx_n]_{t})-\frac{1}{\overline{U}_{\bsj}}\right).
\end{equation*}
In the following we define $\overline{v}_{i,m}\in\NN$ such that $\overline{v}_{i,m}v_i\equiv 1 \pmod{u_i^{k_{i,m}}}$ and observe with the usage of Lemma~\ref{lem:2} items 4, 5, and 6 that
\begin{align*}
\underline{1}_{B(\bsj)}([\bsx_n]_{t}) \Leftrightarrow n\equiv w_m+\sum_{i=1}^s\overline{M}_{i,\bsj}\overline{U}_{\bsj}b_i^{(T)}\overline{v}^{k_{i,j_i}}_{i,m}u_i^{-1} \pmod{\overline{U}_{\bsj}}
\end{align*}
where $\overline{M}_{i,\bsj}=M_{i,(k_{1,j_1},\ldots,k_{s,j_s})}$. We define $A_{\bsj}\in\{0,1,\ldots,\overline{U}_{\bsj}-1\}$ such that 
\begin{equation}\label{eq:Aj}
A_{\bsj}\equiv \sum_{i=1}^s\overline{M}_{i,\bsj}\overline{U}_{\bsj}b_i^{(T)}\overline{v}^{k_{i,j_i}}_{i,m}u_i^{-1} \pmod{\overline{U}_{\bsj}}
\end{equation}
then 
$$\Sigma_{\bsj,M}=\underline{1}_{[0,M_2)}(A_{\bsj})-M_2\overline{U}^{-1}_{\bsj}.$$

Altogether it is not so hard to see that 
\begin{equation}\label{eq:alphamvalue}
\alpha_m=\sum_{1\leq j_1,\ldots,j_s\leq m} \frac{1}{\overline{U}_{\bsm}}\sum_{M=1}^{\overline{U}_{\bsm}}\Sigma_{\bsj,M}=\sum_{1\leq j_1,\ldots,j_s\leq m}\left(\frac12 -\frac{A_{\bsj}}{\overline{U}_{\bsj}}-\frac{1}{2\overline{U}_{\bsj}}\right)
\end{equation}
as 
\begin{align*}
\frac{1}{\overline{U}_{\bsm}}\sum_{M=1}^{\overline{U}_{\bsm}}\Sigma_{\bsj,M}&=\frac{1}{\overline{U}_{\bsj}}\sum_{M_2=1}^{\overline{U}_{\bsj}}\left(\underline{1}_{[0,M_2)}(A_{\bsj})-M_2\overline{U}^{-1}_{\bsj}\right)\\
&=1-\frac{A_{\bsj}}{\overline{U}_{\bsj}}-\frac{\overline{U}_{\bsj}+1}{2 \overline{U}_{\bsj}}.
\end{align*}
For the next step we observe, that by the definition of the $k_{i,{j_i}}$ we know that $\alpha_i|k_{i,{j_i}}$ as well as $\beta_i|k_{i,{j_i}}$. 
The first together with Lemma~\ref{lem:2} item 7 ensures $\overline{v}_{i,m}^{k_{i,j_i}}\equiv 1\pmod{u_i}$. 
The second ensures that for $l\neq i$, $u_l^{k_{l,j_l}}\equiv 1 \pmod{u_i}$. Hence $\overline{M}_{i,\bsj}\equiv 1 \pmod{u_i}$.

We apply Lemma \ref{lem:3} together with \eqref{eq:Aj} and obtain
$$\frac{A_{\bsj}}{\overline{U}_{\bsj}}\equiv \frac{c}{u_1\cdots u_s}\not\equiv \frac{1}{2}\pmod{1}$$
with $c\in\{0,1,\ldots,u_1\cdots u_s-1\}$ independent of $\bsj$. 
This, \eqref{eq:alphamvalue}, and the fact that $\overline{U}_{\bsj}\geq 2^{j_1+\cdots+j_s}$ yields
$$|\alpha_m|\geq \frac{m^s}{4u_1\cdots u_s} \mbox{ as }m\geq 2u_1\cdots u_s.$$
The latter is guaranteed by \eqref{eq:mLB}.

Hence with $C:=1/(2^{s+2}s^su_1\cdots u_s u_0^{s^2+s}\log^s u_0)$ and bearing in mind \eqref{eq:UBUm} we obtain the following chain of inequalities
\begin{align*}
C\log^s N\leq \frac{m^s}{4u_1\cdots u_s} \leq |\alpha_m|\leq \sup_{1\leq M\leq \overline{U}_{\bsm}}MD_M^*([\bsx_{n+w_m}]_t)\\
\leq \sup_{1\leq L,L+M\leq 2\overline{U}_{\bsm}}MD_M^*([\bsx_{n+L}]_t)\leq 2 \sup_{1\leq M\leq N}MD_M^*([\bsx_n]_t).
\end{align*}

This lower bound is valid for the truncated point set $([\bsx_n]_t)_{0\leq n<N}$. We can get rid of the truncation by performing an $\varepsilon$-shift following an idea of Niederreiter and \"{O}zbudak \cite[Proof of Lemma~4.2]{NieOez} and using a general principle of \cite[Proof of Lemma~2.5]{niesiam} (see also \cite[End of the proof of Theorem~3]{HoferRB}).

Altogether, Theorem~\ref{thm:1} follows. 

\section*{Acknowledgments}

The author is supported by the Austrian Science Fund (FWF): Project F5505-N26, which is a part of the Special Research Program ``Quasi-Monte Carlo Methods: Theory and Applications''. Furthermore, the author appreciates the valuable comments of Isabel Pirsic.

Roswitha Hofer, Institute of Financial Mathematics and Applied Number Theory, Johannes Kepler University Linz, Altenbergerstr. 69, 4040 Linz, AUSTRIA, roswitha.hofer@jku.at


\begin{thebibliography}{99}


\bibitem{AFS} S.~Akiyama, C.~Frougny, and J.~Sakarovitch, \textit{Powers of rationals modulo $1$ and rational base number systems.} \emph{Israel J. Math.}  \textbf{168}, 53--91, 2008.



\bibitem{beck} J.~Beck, \textit{A two-dimensional van Aardenne-Ehrenfest theorem in irregularities of distribution. }\emph{Compositio Math.} \textbf{72}, 269--339, 1989.

\bibitem{BLV} D.~Bilyk, M.T.~Lacey, and A.~Vagharshakyan, \textit{On the small ball inequality in all dimensions.} \emph{J. Funct. Anal.} \textbf{254}, 2470--2502, 2008.

\bibitem{BL} D.~Bilyk and M.T.~Lacey, \textit{On the small ball inequality in three dimensions.} \emph{Duke Math. J.} \textbf{143}, 81--115, 2008.



\bibitem{HoferRB} R.~Hofer, \textit{Halton-Type Sequences to Rational Bases in the Ring of Rational Integers and in the Ring of Polynomials over a Finite Field}, \emph{Mathematics and Computers in Simulation} \textbf{143}, 78--88, 2018.


\bibitem{KaltStock} L.~Kaltenb\"{o}ck and W.~Stockinger, \textit{A survey on M.B. Levin's proofs for the exact lower discrepancy bounds of special sequences and point sets.} To appear in \emph{Unif. Distrib. Theory}. 

\bibitem{LevinH1} M.B.~Levin, \textit{On the lower bound of the discrepancy of Halton's sequence I.}  \emph{C. R. Math. Acad. Sci. Paris} \textbf{354}, no. 5, 445--448, 2016.

\bibitem{LevinH2} M.B.~Levin, \textit{On the lower bound of the discrepancy of Halton's sequence II.}
\emph{Eur. J. Math.} \textbf{2}, no. 3, 874--885, 2016.

\bibitem{Levints1} M.B.~Levin, \textit{On the lower bound of the discrepancy of $(t,s)$-sequences: I.} \emph{C. R. Math. Acad. Sci. Paris} \textbf{354},  no. 6, 562--565, 2016. 

\bibitem{Levints2} M.B.~Levin, \textit{On the lower bound of the discrepancy of $(t,s)$-sequences: II.} \emph{Online J. Anal. Comb.  No.} \textbf{12},   74 pp., 2017. 

\bibitem{niesiam} H.~Niederreiter, \textit{Random Number Generation and Quasi-Monte
Carlo Methods.} CBMS-NSF Regional Conference Series in Applied Mathematics, 63. SIAM,
Philadelphia, 1992.

\bibitem{NieOez} H.~Niederreiter and F.~\"{O}zbudak, \textit{Low-discrepancy sequences using duality and global function fields.} \emph{Acta Arith.} \textbf{130}, 79--97, 2007. 



\bibitem{roth}  K.F.~Roth, \textit{On irregularities of distribution.} \emph{Mathematika} \textbf{1}, 73--79, 1954.
	
\bibitem{schmidt} W.M.~Schmidt, \textit{Irregularities of distribution VII.} \emph{Acta Arith.} \textbf{\textbf{21}}, 45--50, 1972.
  
  

\end{thebibliography}
\end{document}